\documentclass[12pt,twoside,reqno]{amsart}
\usepackage[colorlinks=false,citecolor=blue]{hyperref}
\usepackage{mathptmx, amsmath, amssymb, amsfonts, amsthm, mathptmx, enumerate, color,mathrsfs}
\usepackage{graphicx}
\usepackage{epstopdf}

\newcommand{\vertiii}[1]{{\left\vert\kern-0.25ex\left\vert\kern-0.25ex\left\vert #1 
    \right\vert\kern-0.25ex\right\vert\kern-0.25ex\right\vert}}
\newtheorem{theorem}{Theorem}[section]
\newtheorem{lemma}{Lemma}[section]
\newtheorem{remark}{Remark}[section]

\newtheorem{corollary}{Corollary}[section]

\numberwithin{equation}{section}

\begin{document}
\title{A new collection of $n-$tuple operator inequalities}
\author{Zameddin I. Ismailov, Pembe Ipek Al, Hamid Reza Moradi, and Mohammad Sababheh}
\subjclass[2010]{Primary 47A12, 47A30, 47A63}
\keywords{numerical radius,  operator norm, singular values}

\begin{abstract}
In this paper, we present several new bounds for the norm and numerical radius of sums of Hilbert space operators. The obtained bounds form a new collection that enriches our understanding of these bounds. We compare our bounds with the existing literature using examples that demonstrate, in general, how our results are incomparable with the known bounds. Of particular interest are the treatment of the triangle inequality, the numerical radius of operator matrices, and singular value bounds for sums of operators.
\end{abstract}

\maketitle

%------------------------------------------------------------------------------------%
\pagestyle{myheadings} \markboth{\centerline{}}
{\centerline{}} \bigskip \bigskip 
%------------------------------------------------------------------------------------%

\section{Introduction}
Let $\mathbb{B}(\mathbb{H})$ be the $C^*$-algebra of all bounded linear operators on a complex Hilbert space $\mathbb{H}$. It is customary to use $\left<\cdot,\cdot\right>$ to denote the inner product on $\mathbb{H}$, and $\|\cdot\|$ for the induced norm. The zero operator on $\mathbb{H}$ will be denoted by $O$, and the adjoint of an operator $T$ is $T^*$. The  absolute value of $T$ is defined by $|T|=(T^*T)^{
\frac{1}{2}}.$ 

The operator norm and the numerical radius of an operator $T\in\mathbb{B}(\mathbb{H})$ are two scalar quantities, defined respectively by
\[\|T\|=\sup_{\|x\|=1}\|Tx\|\;{\text{and}}\;\omega(T)=\sup|\left<Tx,x\right>|.\]
As real valued functions on $\mathbb{B}(\mathbb{H})$, both $\|\cdot\|$ and $\omega(\cdot)$ define equivalent norms on $\mathbb{B}(\mathbb{H})$, as we have \cite[Theorem 1.3-1]{Gustafson_Book_1997}
\begin{equation}\label{Eq_Equiv_1}
\frac{1}{2}\|T\|\leq\omega(T)\leq\|T\|; T\in\mathbb{B}(\mathbb{H}).
\end{equation}
Researchers in this field have devoted a considerable amount of time trying to find sharper bounds than those in \eqref{Eq_Equiv_1}. For example, it was shown in \cite{Kittaneh_Studia_2005} that
\begin{equation}
\frac{1}{4}\|\;|T|^2+|T^*|^2\|\leq\omega^2(T)\leq \frac{1}{2}\|\;|T|^2+|T^*|^2\|,
\end{equation}
as one of the most beautiful double-sided bound. The significance of this bound can be found in the cited reference. The reader is referred to \cite{Bhunia_ArchMath_2021, Haj_Georgian_2021, Hoss_Slovaca_2020, Sababheh_JAAC_2023, Sababheh_CAOT_2024} for a list of recent work on this topic.

Another identity for the numerical radius was observed a long time ago in \cite{Haagerup_PAMS_1992}, as follows
\begin{equation}\label{Eq_w_theta}
\omega(T)=\sup_{\theta\in\mathbb{R}}\left\|\Re\left(e^{i\theta}T\right)\right\|,
\end{equation}
where $\Re(\cdot)$ is the real part. This is defined, for $T\in\mathbb{B}(\mathbb{H})$, by $\Re T=\frac{T+T^*}{2}.$ The imaginary part of $T$ is defined as $\Im T=\frac{T-T^*}{2i}.$ Then any operator $T\in\mathbb{B}(\mathbb{H})$ can be written via the Cartesian decomposition $T=\Re T+i\Im T$.

Implementing \eqref{Eq_w_theta}, it was shown in \cite{Hirzallah_IEOT_2011} that if $T_1,T_2\in\mathbb{B}(\mathbb{H})$, then
\begin{equation}\label{Eq_Ident_Bl}
\omega \left( \left[ \begin{matrix}
   O & {{T}_{1}}  \\
   T_{2}^{*} & O  \\
\end{matrix} \right] \right)=\sup_{\theta\in\mathbb{R}}\|T_1+e^{i\theta}T_2\|,
\end{equation}
as an identity for the numerical radius of the operator matrix $\left[ \begin{matrix}
   O & {{T}_{1}}  \\
   T_{2}^{*} & O  \\
\end{matrix} \right]\in\mathbb{B}(\mathbb{H}\oplus\mathbb{H}).$ Operator matrices have played a vital role in the advancement of numerical radius investigation, as one can see in \cite{Abu-Omar_LAA_2015, Conde_GMJ_2024, Feki_Georgian_2023, Hajmohamadi_JMI_2018, Qiao_JMI_2022, Shebrawi_LAA_2017}.

Notice that \eqref{Eq_Ident_Bl} immediately implies
\begin{equation}\label{Eq_Bound_Bl_1}
\omega \left( \left[ \begin{matrix}
   O & {{T}_{1}}  \\
   T_{2}^{*} & O  \\
\end{matrix} \right] \right)\leq\frac{\|T_1\|+\|T_2\|}{2},
\end{equation}
as one of the sharpest upper bounds for $\omega \left( \left[ \begin{matrix}
   O & {{T}_{1}}  \\
   T_{2}^{*} & O  \\
\end{matrix} \right] \right)$ available in the literature.

In this paper, we will be interested in pursuing this path by presenting several new relations for the operator norm and numerical radius of sums of operators and operator matrices. Our results will be compared with celebrated existing results, and it will be seen how our results introduce a new collection of independent bounds that are generally incomparable with the existing ones.

Another target is to show singular value bounds in the same theme. We recall that the singular values of a compact operator $T\in\mathbb{B}(\mathbb{H})$ are the eigenvalues of $|T|.$ These eigenvalues are countably decreasing, and will be enumerated as $s_1(T)\geq s_2(T)\geq\ldots.$ The class of compact operators in $\mathbb{B}(\mathbb{H})$ will be denoted by $\mathbb{K}(\mathbb{H}).$ Recent research about singular values can be found in \cite{Kittaneh_MIA_2024,Moradi_AMP_2023,Sababheh_JMI_2022,Zhao_Filomat_2022,Zou_LAA_2017} and the references therein. 

The max-min principle for the singular values is one of the most efficient tools that states the following; see \cite[Theorem 1.5]{Simon_Book_1979} or \cite[Theorem 9.1]{Gohberg_Book_2003}.
\begin{lemma}\label{lem_maxmin}
Let $T\in\mathbb{K}(\mathbb{H})$. Then  for $j=1,2,\ldots,$
\[s_j(T)=\underset{\dim M=j}{\mathop{\max }}\,\underset{\left\| x \right\|=1}{\mathop{\underset{x\in M}{\mathop{\min }}\,}}\,\left\| Tx \right\|.\]
\end{lemma}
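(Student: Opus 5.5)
The plan is to prove the max-min formula in Lemma \ref{lem_maxmin} by reducing it to the Courant--Fischer principle for the positive compact operator $|T|$. First, note that $\|Tx\|^2=\langle T^*Tx,x\rangle=\langle |T|^2x,x\rangle=\||T|x\|^2$. Hence $\|Tx\|=\||T|x\|$ for every $x$, and we may replace $T$ by $|T|$. Since $|T|$ is compact and positive, the spectral theorem for compact self-adjoint operators gives an orthonormal system $\{e_k\}$ with
\[
|T|e_k=s_k(T)e_k,\qquad |T|=\sum_k s_k(T)\langle\cdot,e_k\rangle e_k .
\]
Here $|T|$ vanishes on the orthogonal complement of $\overline{\operatorname{span}}\{e_k\}$. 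If the system is finite, we set $s_k(T)=0$ beyond its length and adjoin any orthonormal vectors from the kernel; the conventions below then still apply.

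The proof has two inequalities.

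\textbf{Lower bound.} Take $M_0=\operatorname{span}\{e_1,\dots,e_j\}$, which has dimension $j$. For a unit vector $x=\sum_{k\le j}c_ke_k$ we get
\[
\||T|x\|^2=\sum_{k\le j}s_k^2|c_k|^2\ge s_j^2 .
\]
So the minimum over unit vectors of $M_0$ is at least $s_j(T)$. It is attained at $x=e_j$, because the unit sphere of a finite-dimensional space is compact and $x\mapsto\|Tx\|$ is continuous. This shows that the supremum over $j$-dimensional subspaces is at least $s_j(T)$ and is achieved at $M_0$, which justifies writing "max".

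\textbf{Upper bound.} Let $M$ be any subspace with $\dim M=j$. Consider the linear map $M\to\mathbb{C}^{j-1}$ given by $x\mapsto(\langle x,e_1\rangle,\dots,\langle x,e_{j-1}\rangle)$. Its domain has dimension $j$ and its target has dimension $j-1$, so it has a nontrivial kernel. This yields a unit vector $x\in M$ orthogonal to $e_1,\dots,e_{j-1}$. For this $x$,
\[
\||T|x\|^2=\sum_{k\ge j}s_k^2|\langle x,e_k\rangle|^2\le s_j^2\sum_{k\ge j}|\langle x,e_k\rangle|^2\le s_j^2 .
\]
Any component of $x$ in $\ker|T|$ contributes nothing to this sum. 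Therefore the minimum over unit vectors of $M$ is at most $s_j(T)$, and combining the two inequalities gives the identity.

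The only delicate step is the bookkeeping in the spectral decomposition. We must handle the kernel of $|T|$ and the case where $T$ has only finitely many nonzero singular values, so that $s_j(T)=0$ for large $j$ is consistent with both inequalities. The dimension-counting argument in the upper bound is the real content of the proof. In the paper itself this lemma is simply cited from \cite{Simon_Book_1979,Gohberg_Book_2003}.
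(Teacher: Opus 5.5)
Your proof is correct. You reduce to $|T|$ via $\|Tx\|=\||T|x\|$, take $M_0=\operatorname{span}\{e_1,\dots,e_j\}$ for the lower bound, and use rank--nullity to find a unit vector in $M$ orthogonal to $e_1,\dots,e_{j-1}$ for the upper bound, handling the kernel of $|T|$ and the finite-rank case properly; the paper gives no proof of its own and only cites the monographs of Simon and of Gohberg et al., where this same standard Courant--Fischer argument is used.
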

\section{Main results}
Now we begin our discussion of the new results, starting with the following norm-numerical radius inequality for the sum of $n$ operators. Some special cases that help better appreciate this are discussed next.

\begin{theorem}\label{Thm_n_1}
Let ${{T}_{1}},{{T}_{2}},\ldots ,{{T}_{n}}\in \mathbb B\left( \mathbb H \right)$. Then 
	\[{{\left\| \sum\limits_{k=1}^{n}{{{T}_{k}}} \right\|}^{2}}\le \left\| \sum\limits_{k=1}^{n}{T_{k}^{*}{{T}_{k}}} \right\|+\omega \left( \sum\limits_{j=1}^{n}{T_{j}^{*}}\sum\limits_{k=1}^{n}{{{T}_{k}}}-\sum\limits_{k=1}^{n}{T_{k}^{*}{{T}_{k}}} \right).\]
\end{theorem}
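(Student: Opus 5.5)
Set $T=\sum_{k=1}^{n}T_k$ and $S=\sum_{k=1}^{n}T_k^*T_k$. The plan is to compute $\|Tx\|^2$ for a unit vector $x$ and split it into a diagonal part, controlled by $S$, and an off-diagonal part, controlled by the numerical radius. The statement is exactly this decomposition once we observe that
\[\sum_{j=1}^{n}T_j^*\sum_{k=1}^{n}T_k=T^*T.\]
So the operator inside $\omega(\cdot)$ is $T^*T-S=\sum_{j\neq k}T_j^*T_k$, which is self-adjoint.

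First, for any $x\in\mathbb H$ with $\|x\|=1$, expand
\[\|Tx\|^2=\left<T^*Tx,x\right>=\left<Sx,x\right>+\left<(T^*T-S)x,x\right>.\]
Both terms are real, since $S$ and $T^*T-S$ are self-adjoint.

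Next, bound each term separately. For the first, since $S\geq O$,
\[\left<Sx,x\right>\le\|S\|.\]
For the second, by the definition of the numerical radius,
\[\left<(T^*T-S)x,x\right>\le\left|\left<(T^*T-S)x,x\right>\right|\le\omega(T^*T-S).\]
Adding the two bounds gives $\|Tx\|^2\le\|S\|+\omega(T^*T-S)$.

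Finally, take the supremum over all unit vectors $x$. Since $\|T\|^2=\sup_{\|x\|=1}\|Tx\|^2$, this yields the claimed inequality.

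There is no real obstacle here. The only point needing care is recognizing the product of sums as $T^*T$, so that $\|Tx\|^2$ appears on the left. After that, no earlier lemma is required: the argument is the triangle-type split above followed by a supremum.
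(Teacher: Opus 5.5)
Your proof is correct and is essentially the paper's argument. The paper writes $\|T\|^2=\|T^*T\|=\omega(T^*T)$, applies the triangle inequality for $\omega$ to $T^*T=S+(T^*T-S)$, and uses $\omega(S)=\|S\|$; you do the same thing with the triangle inequality unpacked at a fixed unit vector before taking the supremum.
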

\begin{proof}
Since $\|T\|^2=\|T^*T\|$ for any $T\in\mathbb{B}(\mathbb{H}),$ we have
\begin{align*}
{{\left\| \sum\limits_{k=1}^{n}{{{T}_{k}}} \right\|}^{2}}&=\left\|\left(\sum\limits_{k=1}^{n}{{{T}_{k}}}\right)^*\left(\sum\limits_{k=1}^{n}T_{k}\right)\right\|\\
&=\omega\left( \left(\sum\limits_{k=1}^{n}{{{T}_{k}}}\right)^*\left(\sum\limits_{k=1}^{n}T_{k}\right)     \right)\\
&=\omega\left(\sum_{k=1}^{n}T_{k}^*T_{k}+\sum_{1\leq k\not=j\leq n}T_j^*T_{k}\right)\\
&\leq \omega\left(\sum_{k=1}^{n}T_{k}^*T_{k}\right)+\omega\left(\sum_{1\leq k\not=j\leq n}T_j^*T_{k}\right)\\
&=\left\|\sum_{k=1}^{n}T_{k}^*T_{k}\right\|+\omega\left(\sum\limits_{j=1}^{n}{T_{j}^{*}}\sum\limits_{k=1}^{n}{{{T}_{k}}}-\sum\limits_{k=1}^{n}{T_{k}^{*}{{T}_{k}}}\right),
\end{align*}
where we have used the observations that $\|T\|=\omega(T)$ for self-adjoint operators to obtain the second equality, the triangle inequality to obtain the inequality, and 
\[\sum\limits_{1\le k\ne j\le n}{T_{j}^{*}{{T}_{k}}}=\sum\limits_{j=1}^{n}{T_{j}^{*}}\sum\limits_{k=1}^{n}{{{T}_{k}}}-\sum\limits_{k=1}^{n}{T_{k}^{*}{{T}_{k}}}\]
to obtain the last line. This completes the proof.
\end{proof}

\begin{remark}
If we let $n=2$ in Theorem \ref{Thm_n_1}, we obtain
	\[\begin{aligned}
   {{\left\| {{T}_{1}}+{{T}_{2}} \right\|}^{2}}&\le \left\| T_{1}^{*}{{T}_{1}}+T_{2}^{*}{{T}_{2}} \right\|+\omega \left( T_{1}^{*}{{T}_{2}}+T_{2}^{*}{{T}_{1}} \right) \\ 
 & =\left\| T_{1}^{*}{{T}_{1}}+T_{2}^{*}{{T}_{2}} \right\|+2\omega \left( \Re\left( T_{1}^{*}{{T}_{2}} \right) \right) \\ 
 & =\left\| T_{1}^{*}{{T}_{1}}+T_{2}^{*}{{T}_{2}} \right\|+2\left\| \Re\left( T_{1}^{*}{{T}_{2}} \right) \right\|.  
\end{aligned}\]
\end{remark}

\begin{remark}
If we put ${{T}_{1}}=\Re T$ and ${{T}_{2}}=i\Im T$, we obtain
	\[\begin{aligned}
   {{\left\| T \right\|}^{2}}&={{\left\| \Re T+i\Im T \right\|}^{2}} \\ 
 & \le \left\| {{\left( \Re T \right)}^{2}}+{{\left( \Im T \right)}^{2}} \right\|+2\left\| \Re\left( i\left( \Re T \right)\left( \Im T \right) \right) \right\| \\ 
 & =\left\| {{\left( \Re T \right)}^{2}}+{{\left( \Im T \right)}^{2}} \right\|+2\left\| -\Im\left( \left( \Re T \right)\left( \Im T \right) \right) \right\| \\ 
 & =\left\| {{\left( \Re T \right)}^{2}}+{{\left( \Im T \right)}^{2}} \right\|+2\left\| \Im\left( \left( \Re T \right)\left( \Im T \right) \right) \right\|.  
\end{aligned}\]
Since
	\[\left\| {{\left( \Re T \right)}^{2}}+{{\left( \Im T \right)}^{2}} \right\|=\frac{1}{2}\left\| T{{T}^{*}}+{{T}^{*}}T \right\|,\]
we get
	\[{{\left\| T \right\|}^{2}}\le \frac{1}{2}\left\| T{{T}^{*}}+{{T}^{*}}T \right\|+2\left\| \Im \left( \left( \Re T \right)\left( \Im T \right) \right) \right\|.\]
	
	Notice that this provides a reversed version of the well-known bound
	\[ \frac{1}{2}\left\| T{{T}^{*}}+{{T}^{*}}T \right\|\leq\|T\|^2.\]
\end{remark}

\begin{theorem}\label{Thm_n_2}
 Let ${{T}_{1}},{{T}_{2}},\ldots ,{{T}_{n}}\in \mathbb B\left( \mathbb H \right)$. Then 
	\[{{\left\| \sum\limits_{k=1}^{n}{{{T}_{k}}} \right\|}^{2}}\le \sum\limits_{j=1}^{n}{\omega \left( T_{j}^{*}\left( \sum\limits_{k=1}^{n}{{{T}_{k}}} \right) \right)}.\]
\end{theorem}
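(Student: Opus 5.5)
We follow the same route as in the proof of Theorem \ref{Thm_n_1}. Write $S=\sum_{k=1}^{n}T_k$. The starting point is the $C^*$-identity $\|S\|^2=\|S^*S\|$. Since $S^*S$ is positive, hence self-adjoint, its norm coincides with its numerical radius, so that
\[
\|S\|^2=\omega\left(S^*S\right).
\]

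Next we expand only the left factor $S^*$ as $\sum_{j=1}^n T_j^*$ and keep the right factor $S$ intact. This gives
\[
S^*S=\sum_{j=1}^{n}T_j^*S .
\]
Because $\omega(\cdot)$ is a norm on $\mathbb{B}(\mathbb{H})$, the triangle inequality yields
\[
\omega\left(S^*S\right)\le\sum_{j=1}^{n}\omega\left(T_j^*S\right).
\]
Substituting $S=\sum_{k=1}^n T_k$ back into each term gives exactly the stated bound. Pointwise, the argument amounts to $\|Sx\|^2=\sum_j\left<T_j^*Sx,x\right>$ for unit $x$, followed by taking absolute values and suprema.

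There is no real obstacle here. The only point needing care is to use $\omega$ rather than $\|\cdot\|$ on the left. It is the self-adjointness of $S^*S$ that lets us pass to the numerical radius, and this is what produces the sharper quantities $\omega\left(T_j^*S\right)$, which are bounded by $\|T_j^*S\|$.
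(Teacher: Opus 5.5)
Your proposal is correct and follows the paper's proof essentially step for step. Both use the identity $\|S\|^2=\|S^*S\|=\omega(S^*S)$, valid because $S^*S$ is self-adjoint, then write $S^*S=\sum_{j=1}^{n}T_j^*S$ and apply the triangle inequality for $\omega$.
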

\begin{proof}
Following the proof of Theorem \ref{Thm_n_1}, we have
\begin{align*}
{{\left\| \sum\limits_{k=1}^{n}{{{T}_{k}}} \right\|}^{2}}&=\left\|\left(\sum\limits_{k=1}^{n}{{{T}_{k}}}\right)^*\left(\sum\limits_{k=1}^{n}T_{k}\right)\right\|\\
&=\omega\left( \left(\sum\limits_{j=1}^{n}{{{T}_{j}}}^*\right)\left(\sum\limits_{k=1}^{n}T_{k}\right)     \right)\\
&=\omega\left(\sum_{j=1}^{n}\left(T_j^*\sum_{k=1}^{n}T_{k}\right)\right)\\
&\leq \sum_{j=1}^{n}{\omega \left( T_{j}^{*}\left( \sum\limits_{k=1}^{n}{{{T}_{k}}} \right) \right)},
\end{align*}
where we have used the triangle inequality to obtain the last line. This completes the proof.
\end{proof}

\begin{remark}\label{Rem_1}
The case $n=2$ in Theorem \ref{Thm_n_2} implies
	\[{{\left\| {{T}_{1}}+{{T}_{2}} \right\|}^{2}}\le \omega \left( T_{1}^{*}\left( {{T}_{1}}+{{T}_{2}} \right) \right)+\omega \left( T_{2}^{*}\left( {{T}_{1}}+{{T}_{2}} \right) \right).\]
	If we let $T_1=\left[
\begin{array}{cc}
 3 & 3 \\
 -3 & 2 \\
\end{array}
\right]$ and $T_2=\left[
\begin{array}{cc}
 -1 & 0 \\
 -3 & -1 \\
\end{array}
\right],$ we see that 
\[{{\left\| {{T}_{1}}+{{T}_{2}} \right\|}^{2}}= \omega \left( T_{1}^{*}\left( {{T}_{1}}+{{T}_{2}} \right) \right)+\omega \left( T_{2}^{*}\left( {{T}_{1}}+{{T}_{2}} \right) \right)=40,\]
showing the sharpness of this inequality. Furthermore, for these matrices, we find that
$(\|T_1\|+\|T_2\|)^2\approx 59.4117,$ showing how this new inequality can provide a better estimate than the triangle inequality.

Moreover, for these matrices, we have $(2\omega(T_1+T_2))^2\approx 91.2676,$ showing also that this new bound provides a better estimate than the basic bound $\|T_1+T_2\|\leq 2\omega(T_1+T_2).$ These observations help better acknowledge the value of this bound.
\end{remark}

\begin{remark}
If we put ${{T}_{1}}=T$ and ${{T}_{2}}={{T}^{*}}$ in Remark \ref{Rem_1}, we infer that
	\[\begin{aligned}
   4\left\| \Re T \right\|^2&={{\left\| T+{{T}^{*}} \right\|}^{2}} \\ 
 & \le \omega \left( {{T}^{*}}\left( T+{{T}^{*}} \right) \right)+\omega \left( T\left( T+{{T}^{*}} \right) \right) \\ 
 & =2\left( \omega \left( {{T}^{*}}\left( \Re T \right) \right)+\omega \left( T\left( \Re T \right) \right) \right). 
\end{aligned}\]
That is, if $T\in\mathbb{B}(\mathbb{H})$,
\begin{equation}\label{Eq_Ned_001}
\left\| \Re T \right\|^2\le \frac{1}{2}\left( \omega \left( {{T}^{*}}\left( \Re T \right) \right)+\omega \left( T\left( \Re T \right) \right) \right).
\end{equation}	
	
	If we let $\left[
\begin{array}{cc}
 -1 & 3 \\
 -3 & -2 \\
\end{array}
\right],$ we find that
\[\left\| \Re T \right\|^2=4, \frac{1}{2}\left( \omega \left( {{T}^{*}}\left( \Re T \right) \right)+\omega \left( T\left( \Re T \right) \right) \right)\approx 5.31843,\]
while $\omega^2(T)\approx 11.3143.$ This shows that \eqref{Eq_Ned_001} can provide a considerable improvement of the well-known bound $\|\Re T\|\leq\omega(T).$ Again, this is not always the case, showing that this new bound is an independent new bound.
\end{remark}

\begin{remark}
If we put ${{T}_{1}}=\Re T$ and ${{T}_{2}}=i\Im T$, we obtain
	\[\begin{aligned}
   {{\left\| T \right\|}^{2}}&={{\left\| \Re T+i\Im T \right\|}^{2}} \\ 
 & \le \omega \left( \Re T\left( \Re T+i\Im T \right) \right)+\omega \left( -i\Im T\left( \Re T+i\Im T \right) \right) \\ 
 & =\omega \left( \Re T\left( \Re T+i\Im T \right) \right)+\omega \left( \Im T\left( \Re T+i\Im T \right) \right) \\ 
 & =\omega \left( \Re T\left( T \right) \right)+\omega \left( \Im T\left( T \right) \right)  
\end{aligned}\]
\begin{equation}\label{Eq_Ned_01}
{{\left\| T \right\|}^{2}}\le \omega \left( \Re T\left( T \right) \right)+\omega \left( \Im T\left( T \right) \right).
\end{equation}

	We notice, first, that \eqref{Eq_Ned_01} is sharp. This can be seen by letting 
	$T=\left[
\begin{array}{cc}
 3 & 2 \\
 -2 & -3 \\
\end{array}
\right].$ Then direct calculations show that
\[\|T\|^2=\omega \left( \Re T\left( T \right) \right)+\omega \left( \Im T\left( T \right) \right)=25.\]

\end{remark}

\begin{remark}
We have shown that
	\[{{\left\| {{T}_{1}}+{{T}_{2}} \right\|}^{2}}\le \omega \left( T_{1}^{*}\left( {{T}_{1}}+{{T}_{2}} \right) \right)+\omega \left( T_{2}^{*}\left( {{T}_{1}}+{{T}_{2}} \right) \right).\]
That is,
	\[\begin{aligned}
   {{\left\| {{T}_{1}}+{{T}_{2}} \right\|}^{2}}&\le \omega \left( T_{1}^{*}{{T}_{1}}+T_{1}^{*}{{T}_{2}} \right)+\omega \left( T_{2}^{*}{{T}_{1}}+T_{2}^{*}{{T}_{2}} \right) \\ 
 & \le \left\| T_{1}^{*}{{T}_{1}}+T_{1}^{*}{{T}_{2}} \right\|+\left\| T_{2}^{*}{{T}_{1}}+T_{2}^{*}{{T}_{2}} \right\|.  
\end{aligned}\]
If we replace ${{T}_{2}}$ by ${{e}^{i\theta }}{{T}_{2}}$, we infer that
	\[{{\left\| {{T}_{1}}+{{e}^{i\theta }}{{T}_{2}} \right\|}^{2}}\le \left\| T_{1}^{*}{{T}_{1}}+{{e}^{i\theta }}T_{1}^{*}{{T}_{2}} \right\|+\left\| T_{2}^{*}{{T}_{2}}+{{e}^{-i\theta }}T_{2}^{*}{{T}_{1}} \right\|.\]
Now, by taking the supremum over $\theta \in \mathbb{R}$, \eqref{Eq_Ident_Bl} implies
\begin{equation}\label{Eq_Ned_02}	
	{{\omega }^{2}}\left( \left[ \begin{matrix}
   O & {{T}_{1}}  \\
   T_{2}^{*} & O  \\
\end{matrix} \right] \right)\le \frac{1}{2}\left( \omega \left( \left[ \begin{matrix}
   O & T_{1}^{*}{{T}_{1}}  \\
   T_{2}^{*}{{T}_{1}} & O  \\
\end{matrix} \right] \right)+\omega \left( \left[ \begin{matrix}
   O & T_{2}^{*}{{T}_{2}}  \\
   T_{1}^{*}{{T}_{2}} & O  \\
\end{matrix} \right] \right) \right).
\end{equation}
If we let
\(T_1=\left[
\begin{array}{cc}
 -2 & 0 \\
 0 & 1 \\
\end{array}
\right]\) and $T_2=\left[
\begin{array}{cc}
 -1 & 1 \\
 -2 & 2 \\
\end{array}
\right],$ we find that
\[{{\omega }^{2}}\left( \left[ \begin{matrix}
   O & {{T}_{1}}  \\
   T_{2}^{*} & O  \\
\end{matrix} \right] \right)\approx 5.15604, \frac{1}{2}\left( \omega \left( \left[ \begin{matrix}
   O & T_{1}^{*}{{T}_{1}}  \\
   T_{2}^{*}{{T}_{1}} & O  \\
\end{matrix} \right] \right)+\omega \left( \left[ \begin{matrix}
   O & T_{2}^{*}{{T}_{2}}  \\
   T_{1}^{*}{{T}_{2}} & O  \\
\end{matrix} \right] \right) \right)=2.25.\]
On the other hand, $\left(\frac{\|T_1\|+\|T_2\|}{2}\right)^2\approx 6.66228.$ This example provides an evidence that the bound found in \eqref{Eq_Ned_02} can be better than that in \eqref{Eq_Bound_Bl_1}.
\end{remark}

Another upper bound for the norm of the sum of $n$ operators can be found as follows.
\begin{theorem}\label{3}
Let ${{T}_{1}},{{T}_{2}},\ldots ,{{T}_{n}}\in \mathbb B\left( \mathbb H \right)$. Then 
\[{{\left\| \sum\limits_{k=1}^{n}{{{T}_{k}}} \right\|}^{2}}\le \left\| \sum\limits_{k=1}^{n}{T_{k}^{*}{{T}_{k}}}+\frac{1}{2}\left( \left( n-2 \right)\sum\limits_{k=1}^{n}{T_{k}^{*}{{T}_{k}}}+\sum\limits_{k=1}^{n}{T_{k}^{*}}\sum\limits_{k=1}^{n}{{{T}_{k}}} \right) \right\|.\]
\end{theorem}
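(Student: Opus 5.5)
The plan is to notice that the operator inside the norm on the right-hand side simplifies, and then compare it with $\left(\sum_{k=1}^n T_k\right)^*\left(\sum_{k=1}^n T_k\right)$ in the operator order. Write $S=\sum_{k=1}^{n}T_k$ and $A=\sum_{k=1}^{n}T_k^*T_k$. The operator on the right is
\[A+\frac{1}{2}\left((n-2)A+S^*S\right)=\frac{n}{2}A+\frac{1}{2}S^*S,\]
which is positive. As in the proof of Theorem \ref{Thm_n_1}, the left side is $\|S\|^2=\|S^*S\|$. So it suffices to prove the operator inequality $S^*S\le \frac{n}{2}A+\frac{1}{2}S^*S$, which is equivalent to $S^*S\le nA$.

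The key step is this operator Cauchy--Schwarz inequality $S^*S\le nA$. I would prove it through the identity
\[nA-S^*S=\sum_{1\le j<k\le n}(T_j-T_k)^*(T_j-T_k).\]
To check the identity, expand the right-hand side. Each $T_k^*T_k$ appears $n-1$ times. Each cross term $T_j^*T_k$ with $j\neq k$ appears once, with a minus sign. This gives $(n-1)A-\sum_{j\neq k}T_j^*T_k$. Since $S^*S=A+\sum_{j\neq k}T_j^*T_k$, this equals $nA-S^*S$. Each summand on the right is positive, so $nA-S^*S\ge O$.

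Next I would split
\[S^*S=\tfrac12 S^*S+\tfrac12 S^*S\le \tfrac12 S^*S+\tfrac{n}{2}A.\]
Both sides are positive operators, and the norm is monotone on positive operators: $O\le X\le Y$ implies $\|X\|\le\|Y\|$, because $\|X\|=\sup_{\|x\|=1}\langle Xx,x\rangle$ for positive $X$. Hence $\|S\|^2=\|S^*S\|\le\left\|\frac{n}{2}A+\frac12 S^*S\right\|$. Rewriting $\frac n2 A$ as $A+\frac{n-2}{2}A$ gives exactly the stated form.

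There is no serious obstacle here. The only points needing care are the bookkeeping in the expansion of $\sum_{j<k}(T_j-T_k)^*(T_j-T_k)$ and the observation that the right-hand side is a positive operator, which is what allows monotonicity of the norm to be applied.
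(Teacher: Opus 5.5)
Your proof is correct. It establishes the same operator inequality as the paper, but the steps are arranged differently.

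The paper argues vector by vector. It expands $\|Sx\|^2=\sum_k\|T_kx\|^2+\sum_{k\neq j}\Re\langle T_kx,T_jx\rangle$ and bounds each cross term by $\Re\langle a,b\rangle\le\frac14\|a+b\|^2$. Only afterwards does it use the identity $\sum_{k\neq j}(T_k+T_j)^*(T_k+T_j)=2\left((n-2)A+S^*S\right)$ to reach the stated form.

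You simplify the target first, to $\frac n2A+\frac12S^*S$. You then reduce the claim to the operator Cauchy--Schwarz inequality $S^*S\le nA$, proved through a Lagrange-type identity in the differences $T_j-T_k$ rather than the sums $T_k+T_j$.

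Since $\Re\langle a,b\rangle=\frac14\|a+b\|^2-\frac14\|a-b\|^2$, the paper discards $\frac14\sum_{k\neq j}\|(T_k-T_j)x\|^2$. That is exactly your $\frac12\langle (nA-S^*S)x,x\rangle$, so both proofs rest on the same positivity fact.

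Your route gains transparency. It shows the right-hand operator is $S^*S+\frac12(nA-S^*S)$, the midpoint of $S^*S$ and $nA$. Hence the bound always lies between $\|S\|^2$ and $n\|\sum_kT_k^*T_k\|$, refining the latter, and nothing is special about the weight $\frac12$.

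The paper reuses its vectorwise inequality \eqref{6} for the singular value theorem at the end. Your operator inequality $S^*S\le\frac n2A+\frac12S^*S$ is the same statement holding for all $x$, so it would serve equally well there.
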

\begin{proof}
We have, for any unit vector $x\in \mathbb H$,
\begin{equation}\label{1}
\begin{aligned}
   {{\left\| \sum\limits_{k=1}^{n}{{{T}_{k}}}x \right\|}^{2}}&=\Re\left( \sum\limits_{j=1}^{n}{\sum\limits_{k=1}^{n}{\left\langle {{T}_{k}}x,{{T}_{j}}x \right\rangle }} \right) \\ 
 & =\sum\limits_{j=1}^{n}{\sum\limits_{k=1}^{n}{\Re\left\langle {{T}_{k}}x,{{T}_{j}}x \right\rangle }} \\ 
 & =\sum\limits_{k=1}^{n}{{{\left\| {{T}_{k}}x \right\|}^{2}}}+\sum\limits_{1\le k\ne j\le n}{\Re\left\langle {{T}_{k}}x,{{T}_{j}}x \right\rangle }  
\end{aligned}
\end{equation}
On the other hand, we know that
	\[\Re\left\langle a,b \right\rangle \le \frac{1}{4}{{\left\| a+b \right\|}^{2}};\;a,b\in \mathbb H.\]
So, we get by \eqref{1} that
	\[\begin{aligned}
  & \sum\limits_{1\le i\ne j\le n}{\Re\left\langle {{T}_{k}}x,{{T}_{j}}x \right\rangle } \\ 
 & \le \frac{1}{4}\sum\limits_{1\le i\ne j\le n}{{{\left\| \left( {{T}_{k}}+{{T}_{j}} \right)x \right\|}^{2}}} \\ 
 & =\frac{1}{4}\sum\limits_{1\le i\ne j\le n}{\left\langle {{\left( {{T}_{k}}+{{T}_{j}} \right)}^{*}}\left( {{T}_{k}}+{{T}_{j}} \right)x,x \right\rangle } \\ 
 & =\frac{1}{4}\left\langle \sum\limits_{1\le i\ne j\le n}{{{\left( {{T}_{k}}+{{T}_{j}} \right)}^{*}}\left( {{T}_{k}}+{{T}_{j}} \right)}x,x \right\rangle .  
\end{aligned}\]
Therefore, we have shown that if $x\in\mathbb{H}$ is a unit vector, then
	\[\begin{aligned}
  & {{\left\| \sum\limits_{k=1}^{n}{{{T}_{k}}}x \right\|}^{2}} \\ 
 & \le \sum\limits_{k=1}^{n}{{{\left\| {{T}_{k}}x \right\|}^{2}}}+\frac{1}{4}\left\langle \sum\limits_{1\le k\ne j\le n}{{{\left( {{T}_{k}}+{{T}_{j}} \right)}^{*}}\left( {{T}_{k}}+{{T}_{j}} \right)}x,x \right\rangle  \\ 
 & =\sum\limits_{k=1}^{n}{\left\langle T_{k}^{*}{{T}_{k}}x,x \right\rangle }+\frac{1}{4}\left\langle \sum\limits_{1\le k\ne j\le n}{{{\left( {{T}_{k}}+{{T}_{j}} \right)}^{*}}\left( {{T}_{k}}+{{T}_{j}} \right)}x,x \right\rangle  \\ 
 & =\left\langle \sum\limits_{k=1}^{n}{T_{k}^{*}{{T}_{k}}}x,x \right\rangle +\frac{1}{4}\left\langle \sum\limits_{1\le k\ne j\le n}{{{\left( {{T}_{k}}+{{T}_{j}} \right)}^{*}}\left( {{T}_{k}}+{{T}_{j}} \right)}x,x \right\rangle  \\ 
 & =\left\langle \left( \sum\limits_{k=1}^{n}{T_{k}^{*}{{T}_{k}}}+\frac{1}{4}\sum\limits_{1\le k\ne j\le n}{{{\left( {{T}_{k}}+{{T}_{j}} \right)}^{*}}\left( {{T}_{k}}+{{T}_{j}} \right)} \right)x,x \right\rangle  .
\end{aligned}\]
That is,
\begin{equation}\label{6}
{{\left\| \sum\limits_{k=1}^{n}{{{T}_{k}}}x \right\|}^{2}}\le \left\langle \left( \sum\limits_{k=1}^{n}{T_{k}^{*}{{T}_{k}}}+\frac{1}{4}\sum\limits_{1\le i\ne j\le n}{{{\left( {{T}_{k}}+{{T}_{j}} \right)}^{*}}\left( {{T}_{k}}+{{T}_{j}} \right)} \right)x,x \right\rangle .
\end{equation}
Taking the supremum over all unit vectors $x\in \mathbb H$, we get
\begin{equation}\label{2}
{{\left\| \sum\limits_{k=1}^{n}{{{T}_{k}}} \right\|}^{2}}\le \left\| \sum\limits_{k=1}^{n}{T_{k}^{*}{{T}_{k}}}+\frac{1}{4}\sum\limits_{1\le i\ne j\le n}{{{\left( {{T}_{k}}+{{T}_{j}} \right)}^{*}}\left( {{T}_{k}}+{{T}_{j}} \right)} \right\|.
\end{equation}
On the other hand, we know that
\begin{equation}\label{7}
\begin{aligned}
  & \sum\limits_{1\le k\ne j\le n}{{{\left( {{T}_{k}}+{{T}_{j}} \right)}^{*}}\left( {{T}_{k}}+{{T}_{j}} \right)} \\ 
 & =\sum\limits_{k,j=1}^{n}{\left( T_{k}^{*}{{T}_{k}}+T_{j}^{*}{{T}_{j}}+T_{k}^{*}{{T}_{j}}+T_{j}^{*}{{T}_{k}} \right)}-4\sum\limits_{k=1}^{n}{T_{k}^{*}{{T}_{k}}} \\ 
 & =2n\sum\limits_{k=1}^{n}{T_{k}^{*}{{T}_{k}}}+2\sum\limits_{k=1}^{n}{T_{k}^{*}}\sum\limits_{k=1}^{n}{{{T}_{k}}}-4\sum\limits_{k=1}^{n}{T_{k}^{*}{{T}_{k}}} \\ 
 & =2\left( \left( n-2 \right)\sum\limits_{k=1}^{n}{T_{k}^{*}{{T}_{k}}}+\sum\limits_{k=1}^{n}{T_{k}^{*}}\sum\limits_{k=1}^{n}{{{T}_{k}}} \right),
\end{aligned}
\end{equation}
so, we infer from \eqref{2} that
	\[{{\left\| \sum\limits_{k=1}^{n}{{{T}_{k}}} \right\|}^{2}}\le \left\| \sum\limits_{k=1}^{n}{T_{k}^{*}{{T}_{k}}}+\frac{1}{2}\left( \left( n-2 \right)\sum\limits_{k=1}^{n}{T_{k}^{*}{{T}_{k}}}+\sum\limits_{k=1}^{n}{T_{k}^{*}}\sum\limits_{k=1}^{n}{{{T}_{k}}} \right) \right\|,\]
as required.
\end{proof}

\begin{corollary}\label{4}
Let ${{T}_{1}},{{T}_{2}}\in \mathbb B\left( \mathbb H \right)$. Then 
	\[{{\left\| {{T}_{1}}+{{T}_{2}} \right\|}^{2}}\le \left\| \frac{3}{2}\left( {{\left| {{T}_{1}} \right|}^{2}}+{{\left| {{T}_{2}} \right|}^{2}} \right)+\Re\left( T_{1}^{*}{{T}_{2}} \right) \right\|.\]
\end{corollary}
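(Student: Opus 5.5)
This is the case $n=2$ of Theorem \ref{3}, so the plan is to specialize that bound and simplify the operator inside the norm. With $n=2$ the coefficient $n-2$ vanishes, and Theorem \ref{3} gives
\[
{{\left\| {{T}_{1}}+{{T}_{2}} \right\|}^{2}}\le \left\| T_1^*T_1+T_2^*T_2+\frac{1}{2}\left(T_1^*+T_2^*\right)\left(T_1+T_2\right) \right\|.
\]

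Next I will expand the product:
\[
\left(T_1^*+T_2^*\right)\left(T_1+T_2\right)=|T_1|^2+|T_2|^2+T_1^*T_2+T_2^*T_1 .
\]
The cross terms combine as $T_1^*T_2+T_2^*T_1=T_1^*T_2+\left(T_1^*T_2\right)^*=2\Re\left(T_1^*T_2\right)$.

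Substituting this back, the operator inside the norm becomes
\[
|T_1|^2+|T_2|^2+\frac{1}{2}\left(|T_1|^2+|T_2|^2\right)+\Re\left(T_1^*T_2\right)
=\frac{3}{2}\left(|T_1|^2+|T_2|^2\right)+\Re\left(T_1^*T_2\right),
\]
which is exactly the stated bound.

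There is no genuine obstacle here, only bookkeeping. The one step to watch is the coefficient of the $\Re$ term: the factor $\tfrac12$ in Theorem \ref{3} cancels the $2$ from $2\Re(T_1^*T_2)$, so the coefficient is $1$. If one prefers not to rely on the general index bookkeeping in Theorem \ref{3}, the result can also be read off directly from inequality \eqref{2}. For $n=2$, the sum over $k\neq j$ consists of the two ordered pairs $(1,2)$ and $(2,1)$, so it equals $2(T_1+T_2)^*(T_1+T_2)$, and multiplying by the factor $\tfrac14$ gives the same expression as above.
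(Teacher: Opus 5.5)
Your proposal is correct and is essentially the paper's own proof: specialize Theorem \ref{3} to $n=2$, so the $(n-2)$ term drops out, then expand $(T_1+T_2)^*(T_1+T_2)$ and write the cross terms as $2\Re(T_1^*T_2)$. The factor $\tfrac12$ then gives exactly $\tfrac32\left(|T_1|^2+|T_2|^2\right)+\Re(T_1^*T_2)$.
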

\begin{proof}
The case $n=2$, in Theorem \ref{3}, gives
\[{{\left\| {{T}_{1}}+{{T}_{2}} \right\|}^{2}}\le \left\| T_{1}^{*}{{T}_{1}}+T_{2}^{*}{{T}_{2}}+\frac{1}{2}{{\left| {{T}_{1}}+{{T}_{2}} \right|}^{2}} \right\|.\]
Since 
	\[\begin{aligned}
   {{\left\| {{T}_{1}}+{{T}_{2}} \right\|}^{2}}&\le \left\| T_{1}^{*}{{T}_{1}}+T_{2}^{*}{{T}_{2}}+\frac{1}{2}\left( T_{1}^{*}+T_{2}^{*} \right)\left( {{T}_{1}}+{{T}_{2}} \right) \right\| \\ 
 & =\left\| T_{1}^{*}{{T}_{1}}+T_{2}^{*}{{T}_{2}}+\frac{1}{2}{{\left( {{T}_{1}}+{{T}_{2}} \right)}^{*}}\left( {{T}_{1}}+{{T}_{2}} \right) \right\| \\ 
 & =\left\| T_{1}^{*}{{T}_{1}}+T_{2}^{*}{{T}_{2}}+\frac{1}{2}{{\left| {{T}_{1}}+{{T}_{2}} \right|}^{2}} \right\| \\ 
 & =\left\| \frac{3}{2}\left( {{\left| {{T}_{1}} \right|}^{2}}+{{\left| {{T}_{2}} \right|}^{2}} \right)+\Re\left( T_{1}^{*}{{T}_{2}} \right) \right\|,  
\end{aligned}\]
we have
	\[{{\left\| {{T}_{1}}+{{T}_{2}} \right\|}^{2}}\le \left\| \frac{3}{2}\left( {{\left| {{T}_{1}} \right|}^{2}}+{{\left| {{T}_{2}} \right|}^{2}} \right)+\Re\left( T_{1}^{*}{{T}_{2}} \right) \right\|,\]
as required.
\end{proof}

\begin{remark}
In this remark, we show that the inequality 
\[{{\left\| {{T}_{1}}+{{T}_{2}} \right\|}^{2}}\le \left\| \frac{3}{2}\left( {{\left| {{T}_{1}} \right|}^{2}}+{{\left| {{T}_{2}} \right|}^{2}} \right)+\Re\left( T_{1}^{*}{{T}_{2}} \right) \right\|\]
given  in Corollary \ref{4} is sharp. Indeed, let $T_1=\left[
\begin{array}{cc}
 -2 & 1 \\
 0 & -2 \\
\end{array}
\right]$ and $T_2=\left[
\begin{array}{cc}
 -2 & -1 \\
 0 & 2 \\
\end{array}
\right].$ Then it can be seen that
\[\|T_1+T_2\|^2= \left\| \frac{3}{2}\left( {{\left| {{T}_{1}} \right|}^{2}}+{{\left| {{T}_{2}} \right|}^{2}} \right)+\Re\left( T_{1}^{*}{{T}_{2}} \right) \right\|=16.\]

The other observation is that the corollary can provide better estimates than the triangle inequality in many cases. This can be seen, for example, by considering the above matrices, where we can see that
\[(\|T_1\|+\|T_2\|)^2=2 (9 + \sqrt{17})\approx 26.2462.\]
We point out that this is not always the case, as the triangle inequality can provide better estimates in other situations, revealing that the two bounds are, in general, incomparable.   
\end{remark}

\begin{corollary}\label{Cor_Block_two}
Let $T,{{T}_{1}},{{T}_{2}}\in \mathbb B\left( \mathbb H \right)$. Then 
	\[{{\omega }^{2}}\left( \left[ \begin{matrix}
   O & {{T}_{1}}  \\
   T_{2}^{*} & O  \\
\end{matrix} \right] \right)\le \frac{1}{2}\omega \left( \begin{matrix}
   O & \frac{3}{2}\left( {{\left| {{T}_{1}} \right|}^{2}}+{{\left| {{T}_{2}} \right|}^{2}} \right)  \\
   T_{2}^{*}{{T}_{1}} & O  \\
\end{matrix} \right).\]
In particular,
\[{{\omega }^{2}}\left( {{T}} \right)\le \frac{1}{2}\omega \left( \begin{matrix}
   O & \frac{3}{2}\left( {{\left| {{T}} \right|}^{2}}+{{\left| T^{*} \right|}^{2}} \right)  \\
   T^{2} & O  \\
\end{matrix} \right).\]
\end{corollary}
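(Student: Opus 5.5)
The plan is to run the argument of Corollary \ref{4} along the rotations $e^{i\theta}T_2$ and then pass to the supremum using \eqref{Eq_Ident_Bl}. One point about normalization first. For the first claim to be consistent with \eqref{Eq_Bound_Bl_1}, the identity \eqref{Eq_Ident_Bl} has to be read as
\[\omega\left(\left[\begin{matrix} O & T_1\\ T_2^* & O\end{matrix}\right]\right)=\frac12\sup_{\theta\in\mathbb{R}}\|T_1+e^{i\theta}T_2\|,\]
which is the standard form. As printed, without the $\frac12$, \eqref{Eq_Ident_Bl} would give only $\omega\le\|T_1\|+\|T_2\|$ rather than \eqref{Eq_Bound_Bl_1}. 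I will use this normalized form throughout.

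\textbf{Step 1 (rotation).} Fix $\theta\in\mathbb{R}$ and apply Corollary \ref{4} to the pair $T_1$ and $e^{i\theta}T_2$. Since $|e^{i\theta}T_2|=|T_2|$ and $T_1^*(e^{i\theta}T_2)=e^{i\theta}T_1^*T_2$, this gives
\[\|T_1+e^{i\theta}T_2\|^2\le\left\|A+\Re\left(e^{i\theta}B\right)\right\|,\qquad A=\tfrac32\left(|T_1|^2+|T_2|^2\right),\quad B=T_1^*T_2.\]

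\textbf{Step 2 (drop the real part).} Because $A$ is self-adjoint, $A+\Re(e^{i\theta}B)=\Re\left(A+e^{i\theta}B\right)$. Since $\|\Re X\|\le\|X\|$ for every operator $X$, we obtain
\[\|T_1+e^{i\theta}T_2\|^2\le\|A+e^{i\theta}B\|.\]

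\textbf{Step 3 (take the supremum).} Take the supremum over $\theta$ on both sides and apply the normalized identity twice. On the left it gives $4\,\omega^2\left(\left[\begin{matrix} O & T_1\\ T_2^* & O\end{matrix}\right]\right)$. On the right, apply it with $A$ in the upper corner and $B^*=T_2^*T_1$ in the lower corner; it gives $2\,\omega\left(\left[\begin{matrix} O & A\\ T_2^*T_1 & O\end{matrix}\right]\right)$. Dividing by $4$ yields the first inequality.

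\textbf{Step 4 (the particular case).} Put $T_1=T$ and $T_2=T^*$. Then $|T_2|^2=TT^*=|T^*|^2$ and $T_2^*T_1=T^2$. On the left we get $\omega\left(\left[\begin{matrix} O & T\\ T & O\end{matrix}\right]\right)$. The unitary $\frac{1}{\sqrt2}\left[\begin{matrix} I & I\\ I & -I\end{matrix}\right]$ conjugates this operator matrix to $T\oplus(-T)$, whose numerical radius is $\omega(T)$. This gives the stated special case.

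The only genuinely delicate point is Step 2, passing from the real part to the full operator. It works because $A$ is self-adjoint, and this is exactly what makes the right-hand side expressible through \eqref{Eq_Ident_Bl}. The other thing to watch is keeping the factor $\frac12$ in \eqref{Eq_Ident_Bl}, so that the constant comes out as $\frac12$.
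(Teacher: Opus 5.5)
Your proof is correct and follows the paper's argument. Like the paper, you apply Corollary \ref{4} to $T_1$ and $e^{i\theta}T_2$, discard the real part, take the supremum over $\theta$ via \eqref{Eq_Ident_Bl}, and then substitute $T_2=T^*$; the only differences are that the paper discards the real part via $\|\Re X\|\le\omega(X)\le\|X\|$ where you use $\|\Re X\|\le\|X\|$ directly, and that you prove $\omega\left(\left[\begin{matrix} O & T\\ T & O\end{matrix}\right]\right)=\omega(T)$ by a unitary where the paper just asserts it. Your observation that \eqref{Eq_Ident_Bl} must carry the factor $\frac12$ is right, and it is exactly the normalization the paper's proof implicitly uses to obtain the constant $\frac12$.
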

\begin{proof}
It follows from Corollary \ref{4} that
	\[\begin{aligned}
   {{\left\| {{T}_{1}}+{{T}_{2}} \right\|}^{2}}&\le \left\| \frac{3}{2}\left( {{\left| {{T}_{1}} \right|}^{2}}+{{\left| {{T}_{2}} \right|}^{2}} \right)+\Re\left( T_{1}^{*}{{T}_{2}} \right) \right\| \\ 
 & =\left\| \Re\left( \frac{3}{2}\left( {{\left| {{T}_{1}} \right|}^{2}}+{{\left| {{T}_{2}} \right|}^{2}} \right)+T_{1}^{*}{{T}_{2}} \right) \right\| \\ 
 & \le \omega \left( \frac{3}{2}\left( {{\left| {{T}_{1}} \right|}^{2}}+{{\left| {{T}_{2}} \right|}^{2}} \right)+T_{1}^{*}{{T}_{2}} \right).  
\end{aligned}\]
Indeed, we have shown that
\begin{equation}\label{5}
{{\left\| {{T}_{1}}+{{T}_{2}} \right\|}^{2}}\le \omega \left( \frac{3}{2}\left( {{\left| {{T}_{1}} \right|}^{2}}+{{\left| {{T}_{2}} \right|}^{2}} \right)+T_{1}^{*}{{T}_{2}} \right).
\end{equation}
If we replace ${{T}_{2}}$ by ${{e}^{i\theta }}{{T}_{2}}$, in \eqref{5}, we get
	\[\begin{aligned}
   \frac{1}{4}{{\left\| {{T}_{1}}+{{e}^{i\theta }}{{T}_{2}} \right\|}^{2}}&\le \frac{1}{4}\omega \left( \frac{3}{2}\left( {{\left| {{T}_{1}} \right|}^{2}}+{{\left| {{T}_{2}} \right|}^{2}} \right)+{{e}^{i\theta }}T_{1}^{*}{{T}_{2}} \right) \\ 
 & \le \frac{1}{4}\left\| \frac{3}{2}\left( {{\left| {{T}_{1}} \right|}^{2}}+{{\left| {{T}_{2}} \right|}^{2}} \right)+{{e}^{i\theta }}T_{1}^{*}{{T}_{2}} \right\| \\ 
 & \le \frac{1}{2}\omega \left( \begin{matrix}
   O & \frac{3}{2}\left( {{\left| {{T}_{1}} \right|}^{2}}+{{\left| {{T}_{2}} \right|}^{2}} \right)  \\
   T_{2}^{*}{{T}_{1}} & O  \\
\end{matrix} \right).  
\end{aligned}\]
That is,
	\[\frac{1}{4}{{\left\| {{T}_{1}}+{{e}^{i\theta }}{{T}_{2}} \right\|}^{2}}\le \frac{1}{2}\omega \left( \begin{matrix}
   O & \frac{3}{2}\left( {{\left| {{T}_{1}} \right|}^{2}}+{{\left| {{T}_{2}} \right|}^{2}} \right)  \\
   T_{2}^{*}{{T}_{1}} & O  \\
\end{matrix} \right).\]
Now, by taking the supremum over $\theta \in \mathbb{R}$, \eqref{Eq_Ident_Bl} implies
	\[{{\omega }^{2}}\left( \left[ \begin{matrix}
   O & {{T}_{1}}  \\
   T_{2}^{*} & O  \\
\end{matrix} \right] \right)\le \frac{1}{2}\omega \left( \begin{matrix}
   O & \frac{3}{2}\left( {{\left| {{T}_{1}} \right|}^{2}}+{{\left| {{T}_{2}} \right|}^{2}} \right)  \\
   T_{2}^{*}{{T}_{1}} & O  \\
\end{matrix} \right).\]
In particular, if we replace $T_{2}^{*}$ by ${{T}_{1}}$, we get
	\[\begin{aligned}
   {{\omega }^{2}}\left( {{T}_{1}} \right)&={{\omega }^{2}}\left( \left[ \begin{matrix}
   O & {{T}_{1}}  \\
   {{T}_{1}} & O  \\
\end{matrix} \right] \right) \\ 
 & \le \frac{1}{2}\omega \left( \begin{matrix}
   O & \frac{3}{2}\left( {{\left| {{T}_{1}} \right|}^{2}}+{{\left| T_{1}^{*} \right|}^{2}} \right)  \\
   T_{1}^{2} & O  \\
\end{matrix} \right).  
\end{aligned}\]
Namely,
	\[{{\omega }^{2}}\left( {{T}_{1}} \right)\le \frac{1}{2}\omega \left( \begin{matrix}
   O & \frac{3}{2}\left( {{\left| {{T}_{1}} \right|}^{2}}+{{\left| T_{1}^{*} \right|}^{2}} \right)  \\
   T_{1}^{2} & O  \\
\end{matrix} \right).\]
We deduce the desired result by replacing $T_1$ by $T$.
\end{proof}	
\begin{remark}
So, Corollary \ref{Cor_Block_two} provides an upper bound for ${{\omega }^{2}}\left( \left[ \begin{matrix}
   O & {{T}_{1}}  \\
   T_{2}^{*} & O  \\
\end{matrix} \right] \right)$. We give an example here that shows how this new bound can be better than that in \eqref{Eq_Bound_Bl_1}. Indeed, if we let $T_1=\left[
\begin{array}{cc}
 -3 & 3 \\
 1 & 0 \\
\end{array}
\right]$ and $T_2=\left[
\begin{array}{cc}
 -1 & -3 \\
 1 & 3 \\
\end{array}
\right],$ then we can see that
\[{{\omega }^{2}}\left( \left[ \begin{matrix}
   O & {{T}_{1}}  \\
   T_{2}^{*} & O  \\
\end{matrix} \right] \right)\approx 12.0635, \frac{1}{2}\omega \left( \begin{matrix}
   O & \frac{3}{2}\left( {{\left| {{T}_{1}} \right|}^{2}}+{{\left| {{T}_{2}} \right|}^{2}} \right)  \\
   T_{2}^{*}{{T}_{1}} & O  \\
\end{matrix} \right)\approx 13.1313,\]
while $\left(\frac{\|T_1\|+\|T_2\|}{2}\right)^2\approx 19.2498.$ However, this is not always the case, as there are other examples with the opposite conclusion.
\end{remark}

We conclude with the following singular value inequality.
\begin{theorem}
Let ${{T}_{1}},{{T}_{2}},\ldots ,{{T}_{n}}\in \mathbb K\left( \mathbb H \right)$. Then for $j=1,2,\ldots$
\[s_{j}^{2}\left( \sum\limits_{k=1}^{n}{{{T}_{k}}} \right)\le {{s}_{j}}\left( \sum\limits_{k=1}^{n}{T_{k}^{*}{{T}_{k}}}+\frac{1}{2}\left( \left( n-2 \right)\sum\limits_{k=1}^{n}{T_{k}^{*}{{T}_{k}}}+\sum\limits_{k=1}^{n}{T_{k}^{*}}\sum\limits_{k=1}^{n}{{{T}_{k}}} \right) \right).\]
\end{theorem}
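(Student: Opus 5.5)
The plan is to combine the pointwise inequality \eqref{6}, established in the proof of Theorem \ref{3}, with the max-min principle of Lemma \ref{lem_maxmin}. Write $S=\sum_{k=1}^{n}T_k$. Let $P$ denote the positive operator
\[P=\sum_{k=1}^{n}T_k^*T_k+\frac{1}{2}\left((n-2)\sum_{k=1}^{n}T_k^*T_k+\sum_{k=1}^{n}T_k^*\sum_{k=1}^{n}T_k\right)=\sum_{k=1}^{n}T_k^*T_k+\frac14\sum_{1\le k\ne j\le n}(T_k+T_j)^*(T_k+T_j),\]
where the second expression comes from identity \eqref{7}. Since the $T_k$ are compact, $P$ is compact and positive. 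Hence $s_j(P)=\lambda_j(P)$, and also $s_j(P)=s_j^2(P^{1/2})$.

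First, I will rewrite \eqref{6} for every vector $x\in\mathbb H$ (it is homogeneous of degree two) as
\[\|Sx\|^2\le\langle Px,x\rangle=\|P^{1/2}x\|^2 .\]
Taking square roots gives $\|Sx\|\le\|P^{1/2}x\|$ for all $x$.

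Second, I will apply Lemma \ref{lem_maxmin} to both $S$ and $P^{1/2}$, which is compact as the square root of a compact positive operator. For each subspace $M$ with $\dim M=j$, we have
\[\min_{x\in M,\|x\|=1}\|Sx\|\le\min_{x\in M,\|x\|=1}\|P^{1/2}x\|.\]
Taking the maximum over all such $M$ yields $s_j(S)\le s_j(P^{1/2})$. Squaring then gives
\[s_j^2(S)\le s_j^2(P^{1/2})=s_j(P),\]
which is the claimed inequality.

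The only delicate point is the identification $s_j(P^{1/2})^2=s_j(P)$. This holds because $P\ge O$, so its singular values coincide with its eigenvalues and functional calculus maps the eigenvalues of $P^{1/2}$ to those of $P$. Everything else is a direct transfer of the vector inequality through the max-min formula. Since the proof never needs to handle $\min$ and $\max$ beyond monotonicity, I expect no real obstacle.
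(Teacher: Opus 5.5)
Your proposal is correct and follows the paper's proof essentially step for step: rewrite the pointwise bound \eqref{6} as $\|Sx\|\le\|P^{1/2}x\|$, pass to singular values through Lemma \ref{lem_maxmin}, square using $s_j^2(P^{1/2})=s_j(P)$ for the positive operator $P$, and then rewrite $P$ via \eqref{7}. Your explicit check that $P$ and $P^{1/2}$ are compact fills in a point the paper leaves implicit.
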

\begin{proof}
In \eqref{6}, we have shown that if $x\in\mathbb{H}$ is a unit vector, then
{\small
	\[\begin{aligned}
  & {{\left\| \sum\limits_{k=1}^{n}{{{T}_{k}}}x \right\|}^{2}} \\ 
 & \le \left\langle \left( \sum\limits_{k=1}^{n}{T_{k}^{*}{{T}_{k}}}+\frac{1}{4}\sum\limits_{1\le k\ne j\le n}{{{\left( {{T}_{k}}+{{T}_{j}} \right)}^{*}}\left( {{T}_{k}}+{{T}_{j}} \right)} \right)x,x \right\rangle  \\ 
 & =\left\langle {{\left| \sum\limits_{k=1}^{n}{T_{k}^{*}{{T}_{k}}}+\frac{1}{4}\sum\limits_{1\le k\ne j\le n}{{{\left( {{T}_{k}}+{{T}_{j}} \right)}^{*}}\left( {{T}_{k}}+{{T}_{j}} \right)} \right|}^{\frac{1}{2}}}x,{{\left| \sum\limits_{k=1}^{n}{T_{k}^{*}{{T}_{k}}}+\frac{1}{4}\sum\limits_{1\le k\ne j\le n}{{{\left( {{T}_{k}}+{{T}_{j}} \right)}^{*}}\left( {{T}_{k}}+{{T}_{j}} \right)} \right|}^{\frac{1}{2}}}x \right\rangle  \\ 
 & ={{\left\| {{\left| \sum\limits_{k=1}^{n}{T_{k}^{*}{{T}_{k}}}+\frac{1}{4}\sum\limits_{1\le k\ne j\le n}{{{\left( {{T}_{k}}+{{T}_{j}} \right)}^{*}}\left( {{T}_{k}}+{{T}_{j}} \right)} \right|}^{\frac{1}{2}}}x \right\|}^{2}}.  
\end{aligned}\]
}
So,
	\[\left\| \sum\limits_{k=1}^{n}{{{T}_{k}}}x \right\|\le \left\| {{\left| \sum\limits_{k=1}^{n}{T_{k}^{*}{{T}_{k}}}+\frac{1}{4}\sum\limits_{1\le k\ne j\le n}{{{\left( {{T}_{k}}+{{T}_{j}} \right)}^{*}}\left( {{T}_{k}}+{{T}_{j}} \right)} \right|}^{\frac{1}{2}}}x \right\|.\]
Hence, by Lemma \ref{lem_maxmin},
	\[\begin{aligned}
  & {{s}_{j}}\left( \sum\limits_{k=1}^{n}{{{T}_{k}}} \right) \\ 
 & =\underset{\dim M=j}{\mathop{\max }}\,\underset{\left\| x \right\|=1}{\mathop{\underset{x\in M}{\mathop{\min }}\,}}\,\left\| \sum\limits_{k=1}^{n}{{{T}_{k}}}x \right\| \\ 
 & \le \underset{\dim M=j}{\mathop{\max }}\,\underset{\left\| x \right\|=1}{\mathop{\underset{x\in M}{\mathop{\min }}\,}}\,\left\| {{\left| \sum\limits_{k=1}^{n}{T_{k}^{*}{{T}_{k}}}+\frac{1}{4}\sum\limits_{1\le k\ne j\le n}{{{\left( {{T}_{k}}+{{T}_{j}} \right)}^{*}}\left( {{T}_{k}}+{{T}_{j}} \right)} \right|}^{\frac{1}{2}}}x \right\| \\ 
 & ={{s}_{j}}\left( {{\left| \sum\limits_{k=1}^{n}{T_{k}^{*}{{T}_{k}}}+\frac{1}{4}\sum\limits_{1\le k\ne j\le n}{{{\left( {{T}_{k}}+{{T}_{j}} \right)}^{*}}\left( {{T}_{k}}+{{T}_{j}} \right)} \right|}^{\frac{1}{2}}} \right).
\end{aligned}\]
Therefore,
	\[\begin{aligned}
   {{s}_{j}}\left( \sum\limits_{k=1}^{n}{{{T}_{k}}} \right)&\le {{s}_{j}}\left( {{\left| \sum\limits_{k=1}^{n}{T_{k}^{*}{{T}_{k}}}+\frac{1}{4}\sum\limits_{1\le k\ne j\le n}{{{\left( {{T}_{k}}+{{T}_{j}} \right)}^{*}}\left( {{T}_{k}}+{{T}_{j}} \right)} \right|}^{\frac{1}{2}}} \right) \\ 
 & =s_{j}^{\frac{1}{2}}\left( \left| \sum\limits_{k=1}^{n}{T_{k}^{*}{{T}_{k}}}+\frac{1}{4}\sum\limits_{1\le k\ne j\le n}{{{\left( {{T}_{k}}+{{T}_{j}} \right)}^{*}}\left( {{T}_{k}}+{{T}_{j}} \right)} \right| \right) \\ 
 & =s_{j}^{\frac{1}{2}}\left( \sum\limits_{k=1}^{n}{T_{k}^{*}{{T}_{k}}}+\frac{1}{4}\sum\limits_{1\le k\ne j\le n}{{{\left( {{T}_{k}}+{{T}_{j}} \right)}^{*}}\left( {{T}_{k}}+{{T}_{j}} \right)} \right).  
\end{aligned}\]
That is,
	\[s_{j}^{2}\left( \sum\limits_{k=1}^{n}{{{T}_{k}}} \right)\le {{s}_{j}}\left( \sum\limits_{k=1}^{n}{T_{k}^{*}{{T}_{k}}}+\frac{1}{4}\sum\limits_{1\le k\ne j\le n}{{{\left( {{T}_{k}}+{{T}_{j}} \right)}^{*}}\left( {{T}_{k}}+{{T}_{j}} \right)} \right).\]
By \eqref{7}, we infer that
\[s_{j}^{2}\left( \sum\limits_{k=1}^{n}{{{T}_{k}}} \right)\le {{s}_{j}}\left( \sum\limits_{k=1}^{n}{T_{k}^{*}{{T}_{k}}}+\frac{1}{2}\left( \left( n-2 \right)\sum\limits_{k=1}^{n}{T_{k}^{*}{{T}_{k}}}+\sum\limits_{k=1}^{n}{T_{k}^{*}}\sum\limits_{k=1}^{n}{{{T}_{k}}} \right) \right),\]
as required.
\end{proof}

\vspace{.25in}

{\tiny (Z. I. Ismailov) Department of Mathematics, Karadeniz Technical University, Trabzon, Turkey}

{\tiny \textit{E-mail address:} zameddin.ismailov@gmail.com}

\vskip 0.3 true cm 	

{\tiny (P. I. Al) Department of Mathematics, Karadeniz Technical University, Trabzon, Turkey}
	
{\tiny\textit{E-mail address:} ipekpembe@gmail.com}

\vskip 0.3 true cm 	

{\tiny (H. R. Moradi) Department of Mathematics, Ma.C., Islamic Azad University, Mashhad, Iran 
	
\textit{E-mail address:} hrmoradi@mshdiau.ac.ir}

\vskip 0.3 true cm 	

{\tiny (M. Sababheh) Department of Basic Sciences, Princess Sumaya University for Technology, Amman, Jordan
	
\textit{E-mail address:} sababheh@yahoo.com}
%########################

\end{document}